\newtheorem{theorem}{Theorem}
\newtheorem*{conjecture}{Conjecture}
\title{Central Measures of Continuous Graded Graphs:\\ the Case of Distinct Frequencies}
\begin{document}

\author{{A.~M.~Vershik \thanks{St.~Petersburg Department
of V. A.~Steklov Mathematical Institute of Russian Academy
of Sciences, St.~Petersburg State University, Institute for Information
Transmission Problems of Russian Academy of Sciences (Kharkevich Institute); supported by the RSF grant project no. 21-11-00152.}},
 {F.~V.~Petrov \thanks{St.~Petersburg Department
of V. A.~Steklov Mathematical Institute of Russian Academy
of Sciences, St.~Petersburg State University; supported by the RSF grant project no. 21-11-00152.}}}
\maketitle


\begin{abstract}
We define a class of continuous graded graphs similar to the graph of Gelfand--Tsetlin patterns, and describe the set of all ergodic central measures of discrete type on the path spaces of such graphs. The main observation is that an ergodic central measure on
a subgraph of a Pascal-type  graph can often be obtained as the restriction
of the standard Bernoulli measure to the path space of the subgraph. This observation
dramatically changes the approach to finding central measures also
on discrete graphs, such as the famous Young graph.

The simplest example of this type is given by the theorem on the weak limits of normalized Lebesgue measures on simplices; these are the so-called
Cesàro measures, which are concentrated on the sequences
with prescribed Cesàro limits (this limit parametrizes the corresponding measure).

More complicated examples are the
graphs of continuous Young diagrams with fixed number of rows
and the graphs of spectra of infinite Hermitian matrices of finite rank. We prove existence and uniqueness theorems for ergodic central measures and describe their structure.
In particular, our results
1) give a new spectral description of the so-called infinite-dimensional
Wishart measures~\cite{W}~--- ergodic unitarily invariant measures of discrete type
on the set of infinite Hermitian matrices;
2) describe the structure of continuous analogs
of measures on discrete graded graphs.

New problems and connections which appear are to be considered
in new publications.
\end{abstract}





\section{Introduction}

Locally finite $\mathbb{N}$-graded graphs,
often called Bratteli diagrams,
proved to be a useful instrument
both for algebraic and stochastic problems. It is important to stress
that the geometry of such graphs
is related to the theory of discrete
topological Markov chains, although
the usual questions about Markov chains differ from the questions of the theory of Bratteli diagrams. Combining these two theories
may yield interesting results.
It is well known in the theory of Markov chains  that it is  fruitful to consider
not finitely or countably many
states, but continuously many states.
This approach is well developed in probability
theory, although some generalizations
of notions and results are nontrivial.
But in the theory of graded
graphs, it is not yet developed.
In this paper we want to
consider first examples of results
concerning so-called continuous graded
graphs. We recall that the theory
of continuous graphs is a new and active branch \cite{L},
but the theory of graded graphs has its own problems which are somehow
different from the usual graph theory
problems.

A basic notion for a graph
is the path space, possibly structured.
For a graded graph, we consider
 ``graded'' paths which begin at the
first level and go down.
In this paper we consider graphs in which both the levels and the sets of  edges between consecutive
levels correspond to
convex cones in Euclidean spaces. Such graphs are said to be of \emph{Gelfand--Tsetlin type}.

The simplest
such graph has the half-line $\mathbb{R}_+$ as the vertex set at each level,
and there is an edge from $x$ to $y$ iff $x\leqslant y$. This makes it possible
to identify a path in this graph with a series with nonnegative terms.
We call it the \emph{Cesàro} graph. We are especially interested in the following
two generalizations of the Cesàro graph. The first one is the
continuous Gelfand--Tsetlin graph:  the vertices at level $n$ are
the increasing sequences $x_1\leqslant x_2\leqslant\ldots\leqslant x_n$
of real numbers, and there is an edge between two sequences
$x_1\leqslant x_2\leqslant\ldots\leqslant x_n$ and $y_1\leqslant y_2\leqslant\ldots\leqslant y_{n+1}$ iff they interlace:
$$y_1\leqslant x_1\leqslant y_2\leqslant x_2\leqslant\ldots\leqslant x_n\leqslant y_{n+1}.$$
This graph has the subgraph consisting of nonnegative sequences (``positive Gelfand--Tsetlin graph''). An important intimately related graph is the so-called \emph{rank $d$
Gelfand--Tsetlin graph}, in which the vertices at level $n$ are the
increasing sequences
$x_1\leqslant x_2\leqslant\ldots\leqslant x_d$ of $d$ nonnegative numbers,
and an edge corresponds to the interlacing condition
$$x_1\leqslant y_1\leqslant x_2\leqslant y_2\leqslant\ldots\leqslant x_d\leqslant y_{d}.$$
This graph may be regarded as a subgraph of the positive Gelfand--Tsetlin graph by considering only the levels  $\geqslant d$ and sequences of the form $(0,0,\ldots,0,x_1,\ldots,x_d)$
(with $n-d$ initial zeros). These graphs correspond to the spectra of corners
of Hermitian matrices (arbitrary, or nonnegative definite, or nonnegative definite of rank at most $d$).

For an infinite nonnegative definite
Hermitian matrix $A$ of rank at most $d$,
we may define $A_n$ for $n\geqslant d$ as the principal
$n\times n$ minor of $A$, and let
$x_1\leqslant \ldots \leqslant x_d$
be the $d$ largest eigenvalues of $A_n$
(all the other eigenvalues are equal to 0).
Thus we get a path in
the Gelfand--Tsetlin graph of rank $d$.
The pushforward of measures
on nonnegative definite
infinite Hermitian matrices of
rank $d$ are, therefore, measures
on the path space of the Gelfand--Tsetlin graph of rank $d$, and $U(\infty)$-invariant
measures on matrices correspond to
central measures on the path space.
For matrices, we get the so-called
\emph{Wishart measures}. The Wishart measure of rank $d$
with frequencies $\lambda_1,\ldots,\lambda_d$ is defined as
follows: take $d$ i.i.d.\ standard complex Gaussian vectors
$\xi_1,\xi_2,\ldots,\xi_d\in \mathbb{C}^n$ and consider the matrix
$\sum \lambda_i \xi_i\cdot \xi_i^*$. Without rank restrictions, the $U(\infty)$-invariant
measures include, besides Wishart measures, which now may have infinitely many frequencies, the so-called Gaussian unitary ensemble (GUE), corresponding to Gaussian matrix entries (real on the main diagonal and complex otherwise; the entries on the  diagonal and above the diagonal are independent).

Possibly the
first paper in which the Gaussian unitary ensemble
is considered
in the framework of
central measures on graded graphs is \cite{Bar}.
The complete list of invariant
measures for infinite Hermitian  matrices
(and other similar series)
is known for a while and was obtained by different methods by Pickrell
\cite{Pic} and Olshanski and Vershik
 \cite{VO96}
 (the latter paper also uses the ergodic method but in a
 very different way from what we mean here).

 This problem was considered in an old
 paper  \cite{V74} of the first author
 about the ergodic method, but the list of Wishart measures
 was missed there.
 The invariant measures on positive definite Hermitian
 matrices are easier to find, this problem was
 addressed in \cite{DS,Aus}.

 Gaussian measures ($GUE, GOE$) on matrices
 are especially popular. But we study these measures in terms of the graph of  spectra of matrices rather than the graph of matrices themselves (for the $GUE$, this is done in \cite{Bar}). The spectral approach is important not only in itself, but also because it allows us to discover more subtle properties of invariant measures.  Moreover, this point of view is also possible for other fields (see, for example, \cite{Meta}): for self-adjoint and
 quaternionic matrices. For more
 on the relation between
 graphs of matrices and graphs of spectra, see
 \cite{VP}.

Another important generalization is the graph of increasing sequences $x_1\leqslant \ldots
\leqslant x_d$ of nonnegative real numbers in which there is an edge between two  sequences iff
$x_i\leqslant y_i$ for all $i=1,\ldots,d$. In other words, vertices
are continuous Young diagrams with at most $d$ rows, and an edge corresponds
to one diagram being contained in the other.

A probabilistic Markov chain
is determined by transition
probabilities, but in the theory of Bratteli diagrams, like in many other problems in
mathematics and physics \cite{V21}, the basic notion is that of \emph{cotransition probability}. Cotransition, or cocycle,
is the main additional structure on a graded graph, and here we chose the cotransition probability measures to be the \emph{normalized
Lebesgue measures on the phase spaces of
cotransitions}. These cotransitions
arise in the theory of
invariant measures on Hermitian matrices \cite{Bar}. The Lebesgue cocycle
is a continuous analog of the uniform cocycle for discrete graded graphs.
A measure on the path space of a graded graph with Lebesgue cocycle
is called a \emph{central measure}.

Next, a remarkable problem arises of
describing all central measures for a given graph.
We discuss it for specific cases. For many countable graphs
(the Pascal, Young, Schur graphs,
dynamical graphs on groups and random
walks), this problem has been considered
in the recent decades.

A natural method for solving this problem
is the ergodic method \cite{V74,V21},
see the details below. Applying it
to find central
measures on continuous
graphs is especially spectacular
because of the geometric nature of the cocycle.

In many cases, the list of ergodic central measures is conjecturally known (or even already found by other methods). For example, in our case, each such measure is given by a set of finite or countable frequencies of natural objects. The problem is
 to explain why the frequencies uniquely determine an ergodic central measure, why they are determined by it, and, most importantly,  how the frequency description of the measure is related to the corresponding product measure.
 Up to now, there was no such description not only for continuous graphs, but even for the Young graph. This is what is done below for the chosen class of graded graphs and for some discrete graphs.

Now we outline the contents of the paper. We begin with the fundamental example of the continuous Cesàro graph. The cocycle here is the set of normalized
Lebesgue measures on simplices $\{x_i\geqslant 0,\sum x_i=S\}$ of
growing dimension with the sum $S$ growing linearly with the dimension.
It is easy to understand that a central measure on this graph is the weak limit of Lebesgue measures on simplices, but it is more difficult to prove that
there exists a unique central measure for each
growth rate, i.e., for each frequency.
The structure of the measure is clear: it is a measure on the set of Cesàro
sequences, which explains the name.
Cesàro measures play here the same role as Bernoulli measures for discrete graphs.

For graphs more general than the Cesàro graph, that is, for the finite rank
Gelfand--Tsetlin graphs and for the graph of continuous Young diagrams,
\emph{the central measures with distinct
frequencies are the restrictions of direct products of
Cesàro measures to the corresponding sets of
paths}. In the case where some frequencies are equal, some regularization of the weak limits is needed. But, quite unexpectedly, it turns out that exactly the same, and even simpler, answer can be obtained for the problem of describing the central measures of discrete type for many countable graphs, e.g., for the Young graph.

Note that in \cite{VK81} all central measures for the Young
graph were realized by means of a generalized RSK algorithm
as homomorphic images of Bernoulli measures,
and in the subsequent paper \cite{RomSni}
it was proved that this homomorphism is an isomorphism.
However, this approach, while important, does not prove the completeness of the list of central measures. It turns out that the completeness of the list in this and other cases follows from an entirely different relation between Bernoulli measures and central measures: {\it the latter are not only factors of Bernoulli measures, but also restrictions to their proper subsets}. To the authors' knowledge, for some reason this fact has been so far overlooked by
mathematicians.

The class of continuous graphs under consideration
is wide enough. It is convenient to regard infinite
paths in these graphs as \emph{positive vector series}.
The dimension of the vector being equal to $1$
corresponds to the case of the Cesàro graph.

We emphasize that the main conclusion from our considerations is that the ergodic central measures for a rather wide class of continuous graphs are determined by their frequencies,
i.e., growth rates of specific coordinates.
More precisely, a central measure is a Markov
measure on positive vector series, and the behavior of these random series reflects the asymptotic behavior of frequencies.

 It is important to stress that for the Gaussian unitary ensemble (for the Plancherel measure in the Young graph case), the picture is quite different, since all the limiting frequencies are zero; $GUE$ (as well as the Plancherel measure) owes its existence to a very nontrivial interaction between the positive and negative parts of the
 spectra, each of which has a sublinear (rather than linear) asymptotics.

A remarkable parallel between these examples will be discussed in another paper. We are grateful
to a referee who drew our attention
to the interesting paper \cite{Buf}
where the connection between these two measures
is established via the values of moments.
But we mean a different type of parallelism.

We emphasize that the calculations involved in the ergodic method are sometimes quite nontrivial. In this paper we observe the very fact that central measures are determined by frequencies and find them explicitly in the simplest case of the so-called Cesàro graph. Further calculations are the subject of a forthcoming publication.

\section{Setup}

The vertex set of an $\mathbb{N}$-graded graph is
$$\Gamma=\bigsqcup_{n=0}^{\infty}\Gamma_n,$$
where $\Gamma_n$ is the set of vertices of level $n$ and every edge joins two vertices of consecutive levels.
Here we assume that the following two conditions are satisfied.

\smallskip
1. Euclideness. All levels $\Gamma_n$, $n=0,1,2 \dots$, are identified
with closed finite-dimensional convex polyhedral cones $\Gamma_n\subset {\mathbb R}^{d_n}$
with nonempty interior (but the levels in a graph
are disjoint, so, rigorously speaking, we must write something like $\Gamma_n\subset
{\Bbb R}^{d_n}\times \{n\}$). The dimensions $d_n$ may or may not depend on $n$.

\smallskip
2. Convex incidence. Two vertices $x \in \Gamma_n$, $y \in \Gamma_{n+1}$
of consecutive levels are joined by an edge
iff the pair ${x,y}$
belongs to a certain closed convex polyhedral cone
$$D_n \subset \Gamma_n \times \Gamma_{n+1}, \qquad n=0,1,\dots.$$

We require  the natural
projections $D_n\to \Gamma_n$ and $D_n\to \Gamma_{n+1}$ to be surjective; in
other words, every vertex in $\Gamma_0$ has a neighbor in $\Gamma_1$,
and every vertex in $\Gamma_n$, $n>0$, has neighbors in both  $\Gamma_{n-1}$
and $\Gamma_{n+1}$.

An infinite path is defined as a sequence of vertices
$(v_0,v_1,\ldots)$, $v_i\in V_i$, such that $(v_i,v_{i+1})\in D_i$
for all $i=0,1,\ldots$. The path space is the set
of all infinite paths endowed with a natural (weak) topology.

\smallskip
3. Nondegeneracy. For all interior points
$x \in \Gamma_n$, $z \in \Gamma_{n+1}$, the section
$D^{+}_n(x)=\{y\in \Gamma_{n+1}: (x,y)\in D_n\}$ (the set of second
endpoints of the edges from $x$ to $\Gamma_{n+1}$)
is a nondegenerate convex polyhedral set that does not contain
lines;
and the section  $D^{-}_n(z)=\{y\in \Gamma_n: (y,z) \in D_n\}$ (the set of starting
points of the edges from $\Gamma_n$ to $z$) is a nondegenerate
convex polytope. (The nondegeneracy here means the maximum dimension).
This property can be weakened, but it determines an
important special class of continuous graphs.
\smallskip

A certain class of continuous graphs is already defined.
We proceed to specify the class of graphs under consideration by imposing new conditions on the graph structure.

\smallskip
4. Homogeneity. This condition concerns
the case where all levels
are isomorphic: a~graph is said to be \emph{homogeneous}
if the incidence cones $D_n$ coincide for all $n$.

\smallskip
Graphs satisfying the above properties
1--3 are called {\it continuous graphs of Gelfand--Tsetlin type}, and if all levels of such a graph
coincide and property~4 holds, we call
it a {\it homogeneous  continuous graph of Gelfand--Tsetlin type}.

\section{Cocycles}

 We proceed to the definition of cotransition probabilities
 and cocycle for graded graphs. They are introduced in the same
 (and even simpler) manner as for discrete graded graphs.

 For every interior vertex $x\in \Gamma_n$, $n>0$, the paths from
 $\Gamma_0$ to $x$ form a compact set $\mathcal{P}(x)$
 of dimension $d_0+d_1+\ldots+d_{n-1}$, which is
 stratified in a natural way into sets $\mathcal{P}(y)$ for $y\in D_n^{-}(x)\subset \Gamma_{n-1}$.
 Assume that we have already fixed a probability measure $\mu_y$ on each
 $\mathcal{P}(y)$ for $y\in D_n^{-}(x)\subset \Gamma_{n-1}$. Then, if we fix a
 probability measure on $D_n^{-}(x)$ (so-called cotransition probability measure),
 it defines a probability measure $\mu_x$ on $\mathcal{P}(x)$. So, by an obvious induction,
 a system of
 cotransition measures defines a measure on the paths to $x$ for every vertex $x$
 of $\Gamma$. More generally, it defines a~probability measure
 on the paths from $y$ to $x$
 for any two vertices $x$ and $y$ such that there exists a~path from $y$ to $x$.
 Hereafter, we assume that the measures are absolutely continuous with
 respect to the Lebesgue measures in the corresponding Euclidean spaces. Then,
 for tail-equivalent paths $P_1,P_2$
 (that is, infinite paths that coincide from some point on)
 we can define a \emph{cocycle} $c(P_1,P_2)$ as follows. Take
 a  large number $n$ such that $P_1,P_2$ coincide after  level $n$
 and cut the paths $P_1,P_2$ at this level. We get two paths $P_1(n), P_2(n)$
 belonging to the same set $\mathcal{P}(x)$ for certain $x\in \Gamma_n$.
 The ratio of the densities of $\mu_x$ at $P_2(n)$ and $P_1(n)$ is denoted by
 $c(P_1,P_2)$. It is straightforward that this number does not depend on
 the choice of  $n$.

 In the discrete case, it is natural to consider the uniform distributions on the sets
 $\mathcal{P}(x)$, which correspond to  the \emph{central cocycle}
 $c(P_1,P_2)\equiv 1$ (instead of considering densities, here we may simply divide probabilities).
 Borel
 measures on infinite paths with such a cocycle are called \emph{central measures}. As follows from the definition, they are Markov measures (with time corresponding
 to the grading),  but not vice versa.
 The set of ergodic (indecomposable) central measures is called the
 \emph{absolute} of the graph. Finding the absolute is the most important problem
 in the theory of graded graphs.

 For the class of continuous Gelfand--Tsetlin type graphs introduced above,
 we introduce a natural analog of the central cocycle, which we call the
 \emph{Lebesgue cocycle}. It corresponds to considering the normalized Lebesgue measure
 on each $\mathcal{P}(x)$, in other words, again $c\equiv 1$. Measures with such a~cocycle are again called central measures, and the main problem is again to describe all
 ergodic central measures, i.e., the \emph{absolute} of Gelfand--Tsetlin type graphs.

 More general cocycles are to be considered separately.

\section{Ergodic method}

The so-called ergodic method
for describing central and invariant measures  was suggested in \cite{V74}. It essentially follows from the individual ergodic theorem or the
martingale convergence theorem.
To state the method, we introduce the necessary notion of weak convergence of measures on
the path space.
 A sequence of measures on the path space of a graph is weakly convergent if for any cylinder set the values of the measures of this set converge.
In our situation, the ergodic method can be described as follows
(cf.~\cite{V74,V21}).

\begin{theorem}\label{tt}
For every ergodic central measure $\mu$ on the
path space of a Gelfand--Tsetlin type graph~$\Gamma$
there exists a path, i.e., a sequence of vertices
$\{x_n\}_n$, $x_n \in D_n^-(x_{n+1})$,  $n=1,2,\dots$,
for which the sequence of the Lebesgue measures on the sets $\mathcal{P}(x_n)$
weakly converges to $\mu$.
\end{theorem}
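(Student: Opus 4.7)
The plan is to establish Theorem~\ref{tt} via the reverse martingale convergence theorem applied to conditional expectations with respect to the decreasing filtration of ``tail'' $\sigma$-algebras, combined with the ergodicity of $\mu$. Let $(v_0,v_1,\ldots)$ denote the random path under $\mu$, and let $\mathcal{F}_n$ be the $\sigma$-algebra generated by $(v_n,v_{n+1},\ldots)$. The centrality of $\mu$ says precisely that, conditionally on $\mathcal{F}_n$, the distribution of the initial segment $(v_0,\ldots,v_{n-1})\in \mathcal{P}(v_n)$ is the normalized Lebesgue measure on $\mathcal{P}(v_n)$. Hence for any cylinder set $C$ determined by coordinates up to some level $k$ and any $n\geqslant k$,
\[
\mathbb{E}_\mu[\mathbf{1}_C \mid \mathcal{F}_n](P)=\mathrm{Leb}_{\mathcal{P}(v_n(P))}(C)=\mu_{v_n(P)}(C),
\]
where $\mu_x$ is the normalized Lebesgue measure on $\mathcal{P}(x)$ pushed forward to the path space.

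Next I would invoke the reverse martingale convergence theorem: for each cylinder $C$, the martingale $\mathbb{E}_\mu[\mathbf{1}_C \mid \mathcal{F}_n]$ converges $\mu$-almost surely and in $L^1$ to $\mathbb{E}_\mu[\mathbf{1}_C \mid \mathcal{F}_\infty]$, where $\mathcal{F}_\infty=\bigcap_n \mathcal{F}_n$ is the tail $\sigma$-algebra. Ergodicity of a central measure is equivalent to triviality of $\mathcal{F}_\infty$ (this is the standard characterization of ergodic central measures as extreme points of the convex set of central measures). Consequently $\mathbb{E}_\mu[\mathbf{1}_C \mid \mathcal{F}_\infty]=\mu(C)$ almost surely, and we obtain
\[
\mu_{v_n(P)}(C)\xrightarrow[n\to\infty]{}\mu(C)\qquad\text{for $\mu$-almost every path $P$.}
\]

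To turn this into the existence of a single path $\{x_n\}=\{v_n(P)\}$ along which $\mu_{x_n}\to\mu$ in the sense of the paper (convergence on every cylinder), I would choose a countable collection of cylinder sets that determines weak convergence. Since each $\Gamma_n\subset\mathbb{R}^{d_n}$, the cylinders whose base at each level is a product of rational-endpoint boxes form a countable family $\{C_j\}_{j\geqslant 1}$ whose measures determine any Borel measure on the path space and whose convergence implies convergence of measures on all cylinders (via approximation from inside and outside by such boxes, using absolute continuity with respect to Lebesgue measure). Applying the above convergence result to each $C_j$ and intersecting countably many sets of full $\mu$-measure yields a set $\Omega_0$ with $\mu(\Omega_0)=1$ on which $\mu_{v_n(P)}(C_j)\to\mu(C_j)$ for all $j$; any $P\in\Omega_0$ gives the desired path.

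The main obstacle is the passage from ``convergence along a countable separating family of cylinders'' to ``convergence on all cylinders'' required by the paper's definition of weak convergence. In the discrete setting cylinders are automatically countable, so this issue is invisible, but in our continuous setting it must be handled by approximating arbitrary cylinder sets by rational-box cylinders from above and below using the absolute continuity hypothesis on the measures $\mu_x$. A secondary technical point is to verify carefully that ergodicity of a central measure really is equivalent to triviality of the tail $\sigma$-algebra in this continuous framework; the argument mirrors the discrete case, but one must check that the convex structure on central measures is compatible with the disintegration along $\mathcal{F}_\infty$.
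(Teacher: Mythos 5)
Your proposal is correct and is essentially the paper's own argument: the paper states that ``the usual proof holds verbatim,'' meaning precisely the reverse martingale convergence applied to the tail filtration together with triviality of the tail $\sigma$-algebra for ergodic central measures, which is what you carry out. The only addition you make is to spell out the passage from a countable separating family of rational-box cylinders to all cylinder sets, a detail the paper leaves implicit in the continuous setting.
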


Although the
continuous setting is formally different from
what has been considered earlier,
the usual proof holds verbatim.

This theorem reduces the problem of describing the absolute to concrete calculations,
which may be laborious in practice. On the other hand,
the  conditions of weak convergence in concrete cases may already be restrictive enough and
allow one to describe the absolute explicitly.

The ergodic method was used in \cite{VK81} to describe
the characters of the infinite symmetric group $S_\infty$, in \cite{VO96} to describe
the invariant measures on infinite Hermitian matrices, and in many other
problems about invariant or central measures.

\section{Frequencies}

Here we restate the main problem of describing the absolute of
a Gelfand--Tsetlin type graph in concrete coordinate terms.

Assume that each level has dimension $d$ and
coincides with the cone $$\mathcal{C}:=\{(x_1,\ldots,x_d)\in \mathbb{R}^d\colon 0\leqslant x_1\leqslant x_2\leqslant \ldots \leqslant x_d\}.$$

A convex cone $D_n$ (which is also the same for all
$n$) is defined by several linear inequalities
satisfied by two vertices $x=(x_1,\ldots,x_n)\in \Gamma_n$ and $y=(y_1,\ldots,y_n)\in \Gamma_{n+1}$.

Our two main examples are as follows.

\smallskip
(i) The  {\it rank $d$ Gelfand--Tsetlin graph} is defined by the
interlacing inequalities $$
D_n=\mathcal{D}_{GT}:=\{x,y\colon 0\leqslant x_1\leqslant y_1\leqslant x_2\leqslant y_2\leqslant \ldots \leqslant x_d\leqslant y_d\}.
$$
This graph arises in  random matrix theory.
Namely, consider an $n\times n$
Hermitian matrix
with eigenvalues $0$ (of multiplicity $n-d$),
$y_1,\ldots,y_d$, where $0\leqslant y_1\leqslant \ldots\leqslant y_d$.
Its principal minor of order $n-1$ has
eigenvalues $0$ (of multiplicity $n-d-1$) and
$x_1,\ldots,x_d$, where
$
0\leqslant x_1\leqslant y_1\leqslant x_2\leqslant y_2\leqslant \ldots \leqslant x_d\leqslant y_d$.
Moreover, a unitarily invariant
(in the natural sense) measure on the orbit of
matrices with fixed eigenvalues and fixed
eigenvalues of the minor of order $k$, $d\leqslant k\leqslant n$, is
the Lebesgue measure on the corresponding polytope.
This is proved by Baryshnikov \cite{Bar} in greater generality.

So, along with the Gelfand--Tsetlin graph, we may consider a ``covering''
graph of matrices: at level $k$, the vertices are the $k\times k$ Hermitian matrices  of
rank at most $d$, and an edge joins a $k\times k$
matrix with its principal $(k-1)\times (k-1)$ minor.

The ordinary Gelfand--Tsetlin graph is defined similarly, but without
the rank restriction.

Note that the definition of our graphs uses only inequalities between elements, and thus it can be automatically considered over an arbitrary linearly ordered set (for example, over a segment instead of the half-line) and even a poset.

In the discrete case of the linearly ordered
set $\mathbb{Z}$, it is related to the
representation theory of unitary groups, see, for example,
\cite{BO}.

\smallskip
(ii) {\it The Young jumps graph}. Here $D_n$ is the following cone:
$$
\mathcal{D}_{Y}:=\{x,y\in \mathcal{C}\colon x_1\leqslant y_1, x_2\leqslant y_2,
 \ldots ,x_d\leqslant y_d\}.
$$
If we identify a point $(x_1,\ldots,x_d)$
with the Young diagram with $d$
rows of (not necessarily integer) lengths
$x_1,\ldots,x_d$, then the condition $(x,y)\in \mathcal{D}_{Y}$
means that one diagram is
contained in the other. In the ordinary discrete
Young graph, an edge corresponds to the condition
``one diagram is contained in the other and differs
from it
by exactly one box.'' Here we may
remove (or add, if we consider
moving along a forward path)
arbitrarily many ``boxes.''

\medskip
Let $x(n)=(x_1(n),x_2(n),\ldots,x_d(n))\in \Gamma_n$, $n=1,2,\ldots$, be a path defining a central probability measure $\mu$
on a homogeneous Gelfand--Tsetlin type graph $\Gamma$
according to Theorem \ref{tt}.
If $\lim \frac{x_i(n)}n=\lambda_i\in [0,\infty]$, we say that the $\lambda_i$'s
are the \emph{frequencies} of the corresponding measure. We always may
pass to a subsequence of  $\{x(n)\}$ for which the frequencies
exist. The importance of this notion is due to the fact that
in many cases (and we conjecture that for homogeneous Gelfand--Tsetlin type graphs, this is always the case) the frequencies uniquely
determine an ergodic central measure.

\section{Restricting and recovering}

Let $\Gamma$ be a graded graph with levels
$\Gamma_n$ and
$\tilde{D}_n\subset D_n$
be (measurable) subsets of the edge sets of $\Gamma$.
Denote by
$\tilde{\Gamma}$ the subgraph induced on
these subsets.

Let $\mu$
be a central
probability measure on the path space of
$\Gamma$. Assume that with positive
$\mu${\nobreakdash-}probability a random path is a path in
$\tilde{\Gamma}$, that is, it goes only along
the edges in $\sqcup \tilde{D}_n$.

Our crucial observation is the following
almost obvious theorem.

\begin{theorem}\label{l1}
The restriction of $\mu$ to the path space
of $\tilde{\Gamma}$ is also a central measure.
\end{theorem}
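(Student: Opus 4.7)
The plan is to check the defining Lebesgue-cocycle property directly on the conditional measure $\tilde\mu(B):=\mu(B\cap A)/\mu(A)$, where $A$ is the (measurable) event that a random path uses only edges in $\sqcup\tilde D_n$. The hypothesis $\mu(A)>0$ guarantees that $\tilde\mu$ is a well-defined Borel probability measure on the path space of $\tilde\Gamma$.

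The computation itself has a single substantive step. Fix $n$, a vertex $x\in\tilde\Gamma_n$, and an arbitrary cylinder in the tail specifying a continuation $(x=v_n,v_{n+1},\ldots)$ lying in $\tilde\Gamma$. Centrality of $\mu$ says that the $\mu$-conditional distribution of the initial segment of the path, given this tail, is the normalized Lebesgue measure on the polytope $\mathcal{P}(x)$ of $\Gamma$-paths from $\Gamma_0$ to $x$. Imposing in addition the event $A$ simply restricts this uniform measure to $\tilde{\mathcal P}(x):=\mathcal P(x)\cap A$, the polytope of paths from level $0$ to $x$ that use only the allowed edges $\tilde D_0,\dots,\tilde D_{n-1}$. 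Whenever $\tilde{\mathcal P}(x)$ has positive Lebesgue measure in $\mathcal P(x)$, the restricted-and-renormalized measure coincides with the intrinsic normalized Lebesgue measure on $\tilde{\mathcal P}(x)$. Consequently, for any pair of tail-equivalent paths $P_1,P_2$ in the path space of $\tilde\Gamma$ that coincide from level $n$ onwards at some $x\in\tilde\Gamma_n$, the conditional densities of $\tilde\mu$ at $P_1(n)$ and $P_2(n)$ are equal, so the cocycle $c_{\tilde\mu}(P_1,P_2)$ equals $1$, which is exactly the definition of a central measure on $\tilde\Gamma$.

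The only point requiring a little care, and what I view as the main obstacle, is the tacit dimensional assumption: for $\tilde\mu$-almost every path, the relevant polytope $\tilde{\mathcal P}(x_n)$ at each level must have positive Lebesgue measure inside $\mathcal P(x_n)$, otherwise restricting the uniform measure produces the zero measure and the cocycle ratio is not even defined. This is however automatic from the standing assumption: if a vertex $x\in\tilde\Gamma_n$ is visited by a $\tilde\mu$-positive family of paths, then $\mu(\{\text{paths through }x\}\cap A)>0$, and disintegration of $\mu$ along the tail forces $\tilde{\mathcal P}(x)$ to carry positive Lebesgue measure inside $\mathcal P(x)$. Therefore the density computation above is valid $\tilde\mu$-almost everywhere, and the theorem follows.
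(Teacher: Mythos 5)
Your proof is correct and follows essentially the same route as the paper's: condition on the tail, note that centrality forces the initial segment up to a vertex $x$ to be distributed as normalized Lebesgue measure on $\mathcal{P}(x)$, and observe that restricting a Lebesgue measure to a positive-measure subset and renormalizing yields the normalized Lebesgue measure on that subset, so the cocycle remains identically $1$. The only difference is that you make explicit the almost-everywhere positivity of the restricted polytopes $\tilde{\mathcal{P}}(x)$, a point the paper's one-paragraph proof leaves implicit.
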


\begin{proof}
Fix a level $n_0$. Like every central measure, $\mu$
enjoys the Markov property, that is,
it is characterized by the distribution
at the  level $\Gamma_{n_0}$
and the
conditional measures on the tails after
this level. The initial segments before
$\Gamma_{n_0}$ are distributed (Lebesgue) uniformly
on the corresponding polytopes
$\mathcal{P}(x)$, $x\in \Gamma_{n_0}$,
and do not depend on the tails.
All this is preserved under restriction,
since the restriction of a Lebesgue measure is
again a Lebesgue measure.
\end{proof}

Note that the theorem does not
give a recipe for obtaining
specific formulas for
the restricted measure, and this task can be quite
nontrivial. For example,
consider the ordinary (discrete) Young graph consisting
of Young diagrams with at most $k$
rows. It may be viewed as the subgraph consisting of decreasing
integer sequences $n_1\geqslant
n_2\geqslant \ldots \geqslant n_k\geqslant 0$
in  the Pascal graph $\mathbb{Z}_{\geqslant 0}^k$.
Consider the Bernoulli measure with
frequencies $p_1>p_2>\ldots>p_k$
on $\mathbb{Z}_{\geqslant 0}^k$ (which means
that at each step we increase the
$i$th coordinate by 1 with probability $p_i$
and do not change the other coordinates). Its
restriction to the Young graph is the Thoma
measure with parameters $p_1,\ldots,p_k$:
the probability of a~diagram with
row lengths
$n_1\geqslant
n_2\geqslant \ldots \geqslant n_k\geqslant 0$
is the value of the corresponding
Schur function
at the point $(p_1,\ldots,p_k)$.

A counterpart of Theorem \ref{l1} states that
a finite measure $\nu$ on the path space of the
subgraph
$\tilde{\Gamma}$
can be extended to a measure
$\mu$ on the path space of the whole
graph $\Gamma$ such that the restriction of~$\mu$ to the path space of $\tilde{\Gamma}$
coincides with $\nu$ (but this
$\mu$ is not always finite, this is a
delicate issue).

Indeed, the centrality
property allows us to define $\mu$
on the images of the path space of
$\tilde{\Gamma}$ under transformations
that preserve central measures.
For example, we may consider transformations that fix the tail after level $n_0$
and change the initial segments accordingly
(preserving the Lebesgue measure).
This defines $\mu$ on the set of paths that are eventually in $\tilde{\Gamma}$. It is not
hard to see that this definition is consistent, and we
get a measure on the path space of $\Gamma$
supported on the paths that are eventually
in $\tilde{\Gamma}$.

If the resulting measure is finite, then the original measure
on the path space of $\tilde{\Gamma}$ can be obtained by restriction
(and normalization). For homogeneous Gelfand--Tsetlin type
graphs, as well as for the discrete Young graph, this corresponds
to the case of \emph{distinct frequencies}.

\section{One-dimensional case: Cesàro measures}

Here we deal with the $d=1$ case of the Gelfand--Tsetlin graph
(the Young jumps graph coincides with the
Gelfand--Tsetlin graph for $d=1$) and compute the
central measures of this graph.

A path in our graph is simply a sequence
$0\leqslant x_0\leqslant x_1\leqslant \ldots$
of nonnegative numbers, and the path space
$\mathcal{P}(a)$ for a vertex
$a\in \Gamma_n=[0,\infty)$  is the $n$-dimensional
simplex
$$\Delta_n(a):=\{0\leqslant x_0\leqslant x_1\leqslant \ldots\leqslant x_n=a\}.$$
 We must study
the distribution of finitely many
coordinates $x_0,\ldots,x_{k-1}$ (where $k$
is fixed and $n$ is large) with respect
to the normalized Lebesgue measure $\mu_a$
on this simplex.

Consider an ergodic central measure
on the one-dimensional
Gelfand--Tsetlin graph. By Theorem~\ref{tt},
it corresponds to a certain sequence of vertices
(even to a path, but it is more convenient to consider
a~sequence: the difference is that in a
sequence some levels may be skipped).
Passing to a subsequence, we may assume that
the vertices $a_i\in \mathbb{R}_+$ at levels $n_i$
satisfy the relation $\lim a_i/n_i=\lambda$, where
$\lambda\in [0,\infty]$.

Thus,  the following theorem implies that
every ergodic central measure
on the one-dimensional
Gelfand--Tsetlin graph is an exponential random
walk over the levels of the graph.

\begin{theorem}
Let $\lambda>0$ be constant
and $a=\lambda n+o(n)\in \Gamma_n$
be a vertex of level $n$ of the one-dimensional
Gelfand--Tsetlin graph. Then for every fixed
positive integer $k$, the measure induced
by~$\mu_a$
on the sequences $x_0<x_1<\ldots<x_{k-1}$
converges to the exponential random walk
with mean $\lambda$:\break $x_0,x_1-x_0,\ldots,x_{k-1}-x_{k-2}$ are  i.i.d.\ with distribution
$\operatorname{Exp}(\lambda)$.

If $a=o(n)$, then the corresponding distributions weakly converge
to the $\delta$-distribution at the zero sequence $(0,0,\ldots)$;
 if $a/n\to \infty$, then
the corresponding distributions weakly converge to the zero measure.
\end{theorem}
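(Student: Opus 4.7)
The plan is to reduce the theorem to an explicit density calculation, since $\mu_a$ is uniform on the simplex $\Delta_n(a)$ and the marginal on $(x_0,\ldots,x_{k-1})$ is elementary. First I would record that $\Delta_n(a)$ has Lebesgue volume $a^n/n!$, and that fixing $(x_0,\ldots,x_{k-1})$ leaves the tail $(x_k,\ldots,x_{n-1})$ free in the simplex $\{x_{k-1}\le x_k\le\cdots\le x_{n-1}\le a\}$ of volume $(a-x_{k-1})^{n-k}/(n-k)!$. Dividing, the induced density is
$$
f_n(x_0,\ldots,x_{k-1}) \;=\; \frac{n(n-1)\cdots(n-k+1)}{a^k}\left(1-\frac{x_{k-1}}{a}\right)^{n-k}
$$
on $\{0\le x_0\le\cdots\le x_{k-1}\le a\}$, depending only on $x_{k-1}$.

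For the main case $a=\lambda n+o(n)$ with $\lambda>0$, I would take the limit coordinatewise: the prefactor tends to $\lambda^{-k}$ (from $n/a\to 1/\lambda$), while $(1-x_{k-1}/a)^{n-k}\to e^{-x_{k-1}/\lambda}$ for every fixed $x_{k-1}$. Changing to the increments $y_0=x_0$, $y_i=x_i-x_{i-1}$ (Jacobian one), so that $x_{k-1}=y_0+\cdots+y_{k-1}$, the pointwise limit becomes
$$
\prod_{i=0}^{k-1}\lambda^{-1}e^{-y_i/\lambda}
$$
on $\{y_i\ge 0\}$, which is exactly the joint law of $k$ i.i.d.\ $\operatorname{Exp}(\lambda)$ variables. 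Since each $f_n$ and the limit are probability densities, Scheffé's lemma upgrades the pointwise convergence to $L^1$, hence to total variation and weak convergence of the measures.

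For the two degenerate regimes I would control the distribution by a single moment. The marginal density of $x_{k-1}$, obtained by multiplying $f_n$ by the initial-segment volume $x_{k-1}^{k-1}/(k-1)!$, is a beta-type integrand on $[0,a]$ yielding $E[x_{k-1}]=ak/(n+1)$; when $a=o(n)$ this tends to $0$, and the monotonicity $0\le x_0\le\cdots\le x_{k-1}$ forces the whole vector to $0$ in $L^1$, hence weakly. For $a/n\to\infty$, the explicit CDF $P(x_0\le t)=1-(1-t/a)^n$ tends to $0$ for every fixed $t$ (since $nt/a\to 0$), so every compact set in $\mathbb{R}^k$ eventually carries vanishing mass and the measures escape to infinity.

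The computation itself is short, so the only substantive technical point is the passage from pointwise convergence of densities to weak convergence of measures; Scheffé's lemma handles this cleanly once the limit is recognized as a genuine probability density, which is immediate from the product-of-exponentials form.
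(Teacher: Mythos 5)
Your argument is correct and coincides with the paper's Proof~1: the same ratio of simplex volumes gives the marginal density $\frac{n(n-1)\cdots(n-k+1)}{a^k}(1-x_{k-1}/a)^{n-k}$, which converges pointwise to the product of exponential densities in the increment coordinates. You add two things the paper leaves implicit --- invoking Scheff\'e's lemma to pass from pointwise convergence of densities to weak convergence, and working out the $a=o(n)$ and $a/n\to\infty$ regimes explicitly via the mean $ak/(n+1)$ and the tail of $x_0$ --- both of which are sound.
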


Because of the importance of this theorem
and its relation to generalizations,
we provide
three different proofs. We focus on the case $0<\lambda<\infty$,
the proofs for $\lambda=0$ and $\lambda=\infty$ are similar (and even
simpler).

\begin{proof}[Proof 1]
Denote $y_0=x_0$, $y_i=x_i-x_{i-1}$ for $i=1,\ldots,n-1$.
Then our simplex is defined as $\{y_i\geqslant 0,\sum y_i\leqslant a\}$.
Its volume equals $a^n/n!$.
If we restrict the distribution to the first $k$ coordinates
$(y_0,\ldots,y_{k-1})$, then
the density at the above point is proportional to the
volume of the corresponding section $\{y_k+\ldots+y_{n-1}\leqslant a-(y_0+\ldots+y_{k-1})\}$. Denoting $s=y_0+\ldots+y_{k-1}$,
we see that this volume equals $(a-s)^{n-k}/(n-k)!$. Thus, the density
equals
$$
\frac{(a-s)^{n-k}}{x^{n-k}\cdot a^k}\cdot{n!}{(n-k)!}=
(1-s/a)^{n-k}\cdot \frac{n(n-1)\ldots (n-k+1)}{a^k}
\sim e^{-s/\lambda}\lambda^{-k}=\prod_{i=1}^k \lambda^{-1}e^{-y_i/\lambda},
$$
which proves the claim.
\end{proof}

\begin{proof}[Proof 2]
It is more convenient to study the distribution of
$a-x_{n-1}$, $x_{n-1}-x_{n-2}$, $\ldots$,
$x_{n-k+1}-x_{n-k}$. Of course, this vector has
the same distribution as $x_0,x_1-x_0,\ldots,x_{k-1}-x_{k-2}$, as can be seen from the measure-preserving
automorphism $(x_0,\ldots,x_{n-1})\to
(a-x_{n-1},\ldots,a-x_0)$ of $\Delta_n(a)$.

Consider the following map from $\Delta_n(a)$
to the unit cube $[0,1]^n$, which maps
the normalized Lebesgue measure on the simplex to the Lebesgue measure on the cube:
$$
\Phi_n\colon (x_0,x_1,\ldots,x_{n-1})\to
\left(\frac{x_0}{x_1},\frac{x_1^2}{x_2^2},
\ldots,\frac{x_{n-2}^{n-1}}{x_{n-1}^{n-1}},
\frac{x_{n-1}^n}{a^n}\right)=:(t_0,\ldots,t_{n-1}).
$$
The Jacobi matrix of $\Phi_n$ is triangular,
which immediately implies that its Jacobian equals
$n!$. Thus, $\Phi_n$ indeed maps
the uniform distribution on $\Delta_n(x)$
to the uniform distribution on $[0,1]^n$.
Hence,
\begin{align*}
    a-x_{n-1}&=a\left(1-t_{n-1}^{1/n}
    \right)\sim -\frac{a}n
\log t_{n-1},\\
x_{n-1}-x_{n-2}&=x_{n-1}
\left(1-t_{n-2}^{1/(n-1)}\right)=
x\cdot t_{n-1}^{1/n}\cdot
\left(1-t_{n-2}^{1/(n-1)}\right)\sim
-\frac{x}n \log t_{n-2},\\
x_{n-2}-x_{n-3}&=x_{n-2}
\left(1-t_{n-3}^{1/(n-3)}\right)\sim
-\frac{a}n \log t_{n-3},\\
\ldots&\ldots \ldots,
\end{align*}
which yields the result, since $a/n$ goes to
$\lambda$ and $-\lambda \log T\in \operatorname{Exp}(\lambda)$
when $T\in \operatorname{Unif}(0,1)$. (In the above argument, $a\sim b$ means that $a/b$ converges to 1 in probability. We have used the fact that $t^{1/n}\sim 1$ for $t$ distributed uniformly on $(0,1)$.)
\end{proof}

\begin{proof}[Proof 3] This proof, based on the law of large
numbers, is ideologically close to the proof of the
Maxwell--Poincaré lemma, which states that the distribution
of coordinates of a Lebesgue-random point on
the $n$-dimensional sphere
of radius $\sqrt{n}$ is asymptotically standard normal.

We start with the distribution $\eta_n$ of a vector
$y\in (0,\infty)^n$
defined as the product of the $\operatorname{Exp}(\lambda)$
distributions
over all $n$ coordinates,
and prove that the distribution
of the first $k$ coordinates of this vector
is close to that of a random point
in our simplex $\Delta_n(a)$.
The density of $\eta_n$ at a point $y=(y_1,\ldots,y_n)$
equals $$p(y)=\prod_{i=1}^d \lambda^{-1}e^{-y_i/\lambda}=\lambda^{-n}
e^{-(y_1+\ldots+y_n)/\lambda}.$$
Thus, we see that $p(y)$ depends only on the sum $y_1+\ldots+y_n$ of the coordinates.
In other words, the random point
$y$ can be obtained by choosing the value of
$x=y_1+\ldots+y_n$ at random
(according to a certain distribution,
which is in fact a gamma distribution, but we do
not use this)  and then choosing a uniformly distributed random point in the simplex
$S_x:=\{y_i\geqslant 0,\sum y_i=x\}$.
Now fix a set $\Omega\subset{\mathbb{R}^k}$ that is
a product of intervals. The probability that $(y_1,\ldots,y_k)\in \Omega$ is equal to the expectation
against~$x$ of the probability
that $(y_1,\ldots,y_k)\in \Omega$
for a random point $(y_1,\ldots,y_n)\in S_x$.
Note that $(y_1,\ldots,y_k)\in \Omega$
if and only if
$\frac{\lambda n}x(y_1,\ldots,y_k)\in
\frac{\lambda n}x \Omega$, and the point
$\frac{\lambda n}x(y_1,\ldots,y_n)$
is uniformly distributed
in $S_{\lambda n}$. Now observe that,
by the law of large numbers, for any
$\varepsilon>0$ the probability that
$\frac{\lambda n}x\notin [1-\varepsilon,
1+\varepsilon]$ tends to 0.
When $\tau\in [1-\varepsilon,
1+\varepsilon]$, the sets $\tau \Omega$
vary a little. In particular, their
intersection contains a set $\Omega_{-}(\varepsilon)$
and their union is contained in a set
$\Omega_+(\varepsilon)$ such that the
$\eta_k$-measures of both sets are close to
each other. This yields the required
weak convergence.
\end{proof}

Thus, any ergodic measure on the one-dimensional Gelfand--Tsetlin graph
is concentrated on the series $x_0+(x_1-x_0)+(x_2-x_1)+\ldots$
that are Cesàro summable to a certain number $\lambda>0$.
That is why we suggest to call it a \emph{Cesàro measure}.

\section{Multi-dimensional case}

Here we explain how combining the one-dimensional
Cesàro case and Theorem \ref{l1} allows us to
handle the case of a finite rank Gelfand--Tsetlin
graph and the Young jumps graph.

Note that the rank $d$
Gelfand--Tsetlin graph is a subgraph
of $\mathbf{Ces}^d$,
where $\mathbf{Ces}$ is the Cesàro
graph. On $\mathbf{Ces}^d$
we can consider a Bernoulli measure that
is the product of Cesàro measures
with frequencies $p_1<p_2<\ldots<p_d$.
With positive probability, any
vertex $(x_1,\ldots,x_d)$
is an increasing sequence and
any two consecutive
vertices $(x_1,\ldots,x_d)$ and
$(y_1,\ldots,y_d)$ interlace
(by the law of large numbers, with probability 1 this happens eventually, and it can easily
be seen that with positive probability this
``eventually'' is from the very beginning).
Thus, by our recovering procedure we get a unique
ergodic central measure on the path
space of the Gelfand--Tsetlin graph with given distinct
frequencies.

A measure for which some
frequencies are equal can be obtained as a weak limit
of measures with distinct frequencies, but we
skip the details here. All the same
holds for the Young jumps graph: the ergodic central measures
are in one-to-one correspondence with arrays of frequencies,
and for distinct frequencies the corresponding measures
are obtained by restricting Cesàro--Bernoulli measures.

To summarize, we provided a new way of studying central
measures, which is based not on computations and estimates,
but on the intrinsic properties of these measures. Of course,
it can also be applied to discrete graphs, but, quite surprisingly, it
was realized for continuous graphs. Applications
to discrete graphs are to be considered in a separate paper.
The calculations of transition probabilities also
get a new interpretation.

For the Gelfand--Tsetlin graph
of growing rank (the graph of interlacing sequences),
we have countably many frequencies with a finite
sum of absolute values, and if  all these frequencies
are distinct, the
same restriction argument works.

This method reduces an explicit evaluation
of central measures with given
frequencies to a probabilistic problem of finding the ``Cesàro--Bernoulli'' measures
of the corresponding cones (in order to
compute the restriction).
An alternative, more standard,
way is to apply a continuous version of the Lindstrom--Gessel--Viennot lemma.

As to general Gelfand--Tsetlin
type graphs, we conjecture that for them  central measures
are also uniquely determined by
limiting frequencies.

\medskip
We are grateful to the referees for numerous useful suggestions.


\begin{thebibliography}{99}

\bibitem{Aus} T. Austin. Exchangeable random measures. Ann. Inst. H. Poincaré Probab.
Statist. 51, No. 3 (2015), pp. 842--861.

\bibitem{Bar} Yu. Baryshnikov. GUEs and queues. Probab. Theory Related Fields 119 (2001), pp. 256--274.

\bibitem{BO} A. Borodin, G. Olshanski. Harmonic analysis on the infinite-dimensional unitary group and determinantal point processes.
Annals of Mathematics 161, No. 3 (2005), pp. 1319--1422.

\bibitem{Buf} A. I. Bufetov. A central limit theorem for extremal characters of the infinite symmetric group. Funct. Anal. Appl, 46 (2012),
No. 2, pp. 83--93.

\bibitem{DS} L. N. Dovbysh, V. N. Sudakov.
Gram-de Finetti matrices. Zap. Nauchn. Sem. LOMI 119 (1982), pp. 77--86.  (in Russian)

\bibitem{L} L. Lovász. Large Networks and Graph Limits. American Mathematical Society Colloquium Publications, 60. American Mathematical Society, Providence, RI, 2012. xiv+475 pp.

\bibitem{Meta}  M. Mehta.
Random Matrices. Third edition. Pure and Applied Mathematics (Amsterdam), 142. Elsevier/Academic Press, Amsterdam, 2004. xviii+688 pp.

\bibitem{VO96}  G. Olshanski, A. Vershik.
Ergodic unitarily invariant measures on the space of infinite Hermitian matrices.
Amer. Math. Soc. Transl. Ser. 2 (175) (1996), pp. 137--175.

\bibitem{Pic} D. Pickrell.
Mackey analysis of infinite classical motion groups.
Pacific J. Math. 150 (1991), No.~1, pp. 139--166.

\bibitem{RomSni} D. Romik, P. Sniady.
Jeu de taquin dynamics on infinite Young tableaux and second class particles.
Ann. Probab. 43, No. 2 (2015), pp. 682--737.

\bibitem{V74} A. M. Vershik. Decreasing sequences of sigma-algebras and ergodic theorems. Teor. Veroyatn. i Primen. 19, No. 3 (1974), pp. 657--658. (in Russian)

\bibitem{V21}  A. M. Vershik. A method of defining central and Gibbs measures and the ergodic method. Dokl. Math. 103, No. 2 (2021), pp. 72--75.

\bibitem{VK81} A. M. Vershik, S. V. Kerov. Characters and factor representations of the infinite symmetric group.
Dokl. Akad. Nauk SSSR 257, No. 5 (1981), pp. 1037--1040.
English translation: Sov. Math. Dokl. 23 (1981), pp. 389--392.

\bibitem{VP} A. M. Vershik, F. V. Petrov.
A generalized Maxwell--Poincaré lemma and Wishart measures. J. Math. Sci. (N.Y.) 261 (2022), No. 5, pp. 601--607.

\bibitem{W} J. Wishart. The generalized product moment distribution in samples from a normal
multivariate population. Biometrika 20A (1928), pp. 32--52.

\end{thebibliography}
\end{document}